\DeclarePairedDelimiter\abs{\lvert}{\rvert}
\DeclareMathOperator{\Span}{span}
\DeclareMathOperator{\cl}{cl}
\newcommand{\dis}{\displaystyle}
\renewcommand{\leq}{\leqslant}
\renewcommand{\geq}{\geqslant}
\newcommand{\C}{\mathbb{C}}
\newcommand{\F}{\mathbb{F}}
\newcommand{\N}{\mathbb{N}}
\renewcommand{\P}{\mathbb{P}}
\newcommand{\e}{\varepsilon}
\theoremstyle{plain}
\newtheorem{theorem}{Theorem}[section]
\newtheorem{proposition}[theorem]{Proposition}
\newtheorem{lemma}[theorem]{Lemma}
\theoremstyle{definition}
\newtheorem{definition}[theorem]{Definition}
\crefname{proposition}{Proposition}{Propositions}
\crefname{lemma}{Lemma}{Lemmata}
\crefname{equation}{equation}{equations}
\crefname{theorem}{Theorem}{Theorems}
\let\oldabs\abs
\def\abs{\@ifstar{\oldabs}{\oldabs*}}
\title{\textbf{Double algebraic genericity of universal harmonic functions on trees in the general case}}
\author{C. A. Konidas, V. Nestoridis}
\date{\vspace{-5ex}}
\begin{document}
\pagestyle{myheadings}
\markboth{Double algebraic genericity of universal harmonic functions on trees in the general case}{C. A. Konidas, V. Nestoridis}
\maketitle
\begin{abstract}
\noindent It has been shown that the set of universal functions on trees contains a linear subspace except zero, dense in the space of harmonic functions.
In this paper we show that the set of universal functions contains two linear subspaces except zero, dense in the space of harmonic functions that intersect only at zero.
We work in the most general case that has been studied so far, letting our functions take values over a topological vector space.
\end{abstract}
{\em AMS classification numbers}: 05C05, 60J45, 60J50, 30K99, 46M99\smallskip\\
{\em Keywords and phrases}: Tree, boundary of a tree, harmonic functions, generalized harmonic functions, universal functions, algebraic genericity, double algebraic genericity

\section{Introduction}
Let $T$ be the set of vertices of a rooted tree with root $x_0$.
For each $n \in \N$ we define $T_n$ to be the set of all vertices at distance $n$ from $x_0$ and we also define $T_0 = \{x_0\}$.
Given a vertex $x \in T$, we write $S(x)$ for the set of the children of x.
We assume that for all $n \in \N$ the set $T_n$ is finite and that for every $x \in T$ the set $S(x)$ has at least two elements.
Since $T = \bigcup_{n=0}^{\infty}{T_n}$ and each $T_n$ is finite and nonempty, the set $T$ is infinite denumerable.
With each $x \in T$ and $y \in S(x)$ we associate a real number $q(x,y) > 0$, which we think of as the probability of transition from vertex $x$ to vertex $y$, such that
$$\sum_{y \in S(x)}{q(x,y)} = 1$$
for all $x \in T$.

We define the boundary of $T$, denoted by $\partial T$, as the set of all infinite geodesics originating from $x_0$.
More specifically, an element $e$ of $T$ is of the form $e = \{z_n \in T: n \in \N\}$, where $z_1 = x_0$ and $z_{n+1} \in S(z_n)$ for all $n \in \N$.
For each $x \in T$ we define the boundary sector of $x$ to be the set $B_x = \{ e\in \partial T: x \in e\}$.

Given $x \in T\setminus \{x_0\}$, it must be that $x \in T_n$ for some $n \in \N$.
We set $$p(B_x) = \prod_{j=0}^{n-1}{q(y_j,y_{j+1})},$$ where $y_0 = x_0, y_n = x$ and $y_{j+1} \in S(y_j)$ for all $j \in \{1,\dots,n-1\}$.
For each $n \in \N$ we consider $\mathcal{M}_n$ to be the $\sigma$-algerba on $\partial T$ generated by $\{B_x: x  \in T_n\}$.
We can extend $p$ to a probability measure $\P_n$ on the measurable space $(\partial T, \mathcal{M}_n)$.
One can check that $\mathcal{M}_n \subseteq \mathcal{M}_{n+1}$ and $\P_{n+1}|_{\mathcal{M}_n} = \P_n$ for all $n \in \N$.
By Kolmogorov's Consistency Theorem there exists a complete probability measure $\P$ on the measurable space $(\partial T, \mathcal{M})$, where $\mathcal{M}$ is the completion of the $\sigma$-algebra on $\partial T$ generated by the set $\bigcup_{n=1}^{\infty}{\mathcal{M}_n}$, such that $\P|_{\mathcal{M}_n} = \P_n$ for all $n \in \N$.

Let $E$ be a separable topological vector space metrizable by a translation invariant metric, over a field $\F$.
For every $x \in T$ and $y \in S(x)$ we fix $w(x,y) \in \F\setminus\{0\}$ such that for all $x \in T$
$$\sum_{y \in S(x)}{w(x,y)} = 1.$$

A function $f \in E^T$ is called generalized harmonic if and only if
$$f(x) = \sum_{y \in S(x)}{w(x,y)f(y)}$$
for all $x \in T$.
The set of generalized harmonic functions is denoted by $H(T,E)$.
From this point forward we drop the world generalized and refer to the elements of $H(T,E)$ simply as harmonic functions.

A function $\psi : \partial T \to E$ is said to be $\mathcal{M}$-measurable (respectively $\mathcal{M}_n$-measurable) if and only if $\psi^{-1}(V) \in \mathcal{M}$ (respectively $\psi^{-1}(V) \in \mathcal{M}_n$) for all open $V \subseteq E$.
We obtain the space $L^0(\partial T, E)$ by identifying $\P$-almost everywhere equal $\mathcal{M}$-measurable functions and we equip it with the topology of convergence in probability.

For every $f \in H(T,E)$ and $n \in \N$ we define the function $\omega_n(f): \partial T \to E$ by $\omega_n(f)(e) = f(z)$, where $z$ is the unique element of $e \cap T_n$ and $e \in \partial T$.
Notice that $\omega_n(f)$ is constant on each $B_x$ for $x \in T_n$ with the family $(B_x)_{x\in T_n}$ being a partition of $\partial T$, thus $\omega_n(f)$ is $\mathcal{M}_n$-measurable and moreover $\mathcal{M}$-measurable.

A generalized harmonic function $f \in H(T,E)$ is called universal if and only if the sequence $(\omega_n(f))_{n=1}^{\infty}$ is dense in $L^0(\partial T, E)$. The set of universal functions is denoted by $U(T,E)$. The following theorem was proven in \cite{BIEHLER2}.

\begin{theorem}
The set $U(T,E) \cup \{0\}$ contains a vector space which is dense in $H(T,E)$, that is we have algebraic genericity for the set $U(T,E)$.
\end{theorem}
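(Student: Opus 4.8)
The plan is to construct explicitly a sequence $(f_k)_{k\in\N}$ of harmonic functions whose linear span is dense in $H(T,E)$ and \emph{every} nonzero element of which is universal; the span is then the required vector space. Because $\omega_n$ is linear, $\omega_n(\sum_k c_k f_k)=\sum_k c_k\,\omega_n(f_k)$, and the whole difficulty is to arrange that this remains dense in $L^0(\partial T,E)$ for all nonzero coefficient tuples at once. I would route everything through a single flexibility lemma, which I expect to be the main obstacle.

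\textbf{Flexibility lemma.} Iterating $f(x)=\sum_{y\in S(x)}w(x,y)f(y)$ shows that the values of a harmonic function on $T_0,\dots,T_{m}$ are determined by its values on any deeper level $T_n$, via weights $W(x,u)=\prod w(\cdot,\cdot)$ along the path from $x\in T_m$ to a descendant $u\in T_n$, with $\sum_u W(x,u)=1$. The lemma I would prove is: given the values of a harmonic function already fixed on $T_0,\dots,T_m$, any step function $\psi\in L^0(\partial T,E)$, and any $\e>0$, there exist $n>m$ and a choice of values on $T_n$ (hence a harmonic extension) obeying the upward constraints $\sum_u W(x,u)f(u)=f(x)$ for every $x\in T_m$ while $\P(\{\omega_n(f)\neq\psi\})<\e$. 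The mechanism is to set $f(u)=\psi$ on all but one descendant $u^\ast(x)$ of each $x\in T_m$ and solve the single remaining constraint for $f(u^\ast(x))$, which is possible since $W(x,u^\ast(x))\neq0$. As each $x$ has at least $2^{\,n-m}$ descendants in $T_n$, the exceptional $u^\ast(x)$ can be taken in sectors of total $\P$-measure at most $2^{-(n-m)}$; this estimate, which exploits the branching hypothesis together with $w(x,y)\neq0$, is the crux. The delicate point is bookkeeping: the corrected values may be large and propagate downward, so one must keep the cumulative measure of all exceptional sectors summably small, which I would enforce with geometrically decreasing measure budgets so that approximations met at earlier checkpoint levels survive.

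\textbf{Construction.} Fix a countable dense family $(\phi_l)$ of step functions in $L^0(\partial T,E)$ (available since $E$ is separable and step functions are dense), a countable dense set $(g_l)$ of $H(T,E)$, and a partition $\N=\bigsqcup_{k}A_k$ into infinitely many infinite sets. Applying the flexibility lemma repeatedly along an increasing sequence of checkpoint levels, I would build each $f_k$ so that it is (i) \emph{active} on $A_k$: for every $l$ and every $\e>0$ there are infinitely many $n\in A_k$ with $\P(\{\omega_n(f_k)\neq\phi_l\})<\e$; (ii) \emph{dormant} off $A_k$: $\omega_n(f_k)\to0$ in $L^0$ as $n\to\infty$ through $\N\setminus A_k$; and (iii) close to a prescribed $g_l$ on finitely many initial levels, which is legitimate because the shallow values are just the upward averages and can be steered by the same lemma. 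Imposing (iii) for a cofinal set of indices $k$ makes $\Span\{f_k\}$ dense in $H(T,E)$; note that the construction never requires limits in $E$, so completeness of $E$ is not needed.

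\textbf{Conclusion.} Let $v=\sum_{k\in F}c_k f_k$ be a nonzero combination and choose $k_0\in F$ with $c_{k_0}\neq0$. For $n\in A_{k_0}$, linearity gives $\omega_n(v)=c_{k_0}\,\omega_n(f_{k_0})+\sum_{k\in F\setminus\{k_0\}}c_k\,\omega_n(f_k)$, where the finite second sum tends to $0$ in $L^0$ by dormancy, while activeness and continuity of scalar multiplication make $\{c_{k_0}\,\omega_n(f_{k_0}):n\in A_{k_0}\}$ approximate every target at arbitrarily large $n$. Hence $(\omega_n(v))_n$ is dense, so $v\in U(T,E)$; in particular $v\neq0$, which makes the $f_k$ automatically linearly independent. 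Thus $\Span\{f_k\}\setminus\{0\}\subseteq U(T,E)$ and $\Span\{f_k\}$ is dense, proving the theorem. This ``pick an index in the support'' step is exactly what handles all coefficient tuples over $\F$ simultaneously and sidesteps a naive continuity argument in the coefficients, which fails because the $\omega_n(f_k)$ are unbounded and small coefficient perturbations need not give small perturbations of $\omega_n(v)$.
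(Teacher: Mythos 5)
Your route is genuinely different from the paper's. The paper does not reprove this statement directly (it quotes it from \cite{BIEHLER2}); what it does establish is the stronger \cref{thm:algX}, that the smaller class $X(T,E)\subseteq U(T,E)$ already has algebraic genericity, and the engine there is the sequence-space device: take a single $f=(f_1,f_2,\dots)\in X(T,E^\N)$ (existence is imported from \cite{BIEHLER2}, where $X(T,E^\N)$ is shown dense in $H(T,E^\N)$), perturb each coordinate by a harmonic function which is eventually constant along geodesics so that $(f_n)_{n=1}^\infty$ becomes dense in $H(T,E)$ (\cref{lem:density}), and show that every nonzero combination $a_1f_1+\dots+a_sf_s$ lands in $X(T,E)$ by testing $f$ against the product open set $b_1^{-1}B(0,\delta)\times\cdots\times b_s^{-1}B(\psi,\delta)\times L^0(\partial T,E)\times\cdots$ (\cref{lem:linearity}). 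You instead build the $f_k$ from scratch via a level-by-level prescription lemma and handle nonzero combinations by the ``disjoint active frequencies plus dormancy'' trick. Your flexibility lemma is sound: the one correction per vertex of $T_m$ is solvable because $W(x,u^*)$ is a product of nonzero elements of $\F$, and choosing $u^*(x)$ of minimal sector measure among at least $2^{n-m}$ descendants gives total exceptional measure at most $2^{-(n-m)}$; your use of $\P(\omega_n(f_k)\neq\phi_l)<\e$ rather than metric closeness is exactly what makes the scalar $c_{k_0}$ harmless. Your approach is self-contained (the paper's relies on prior work whose proof is essentially a Baire argument resting on the same flexibility lemma), while the paper's is shorter and yields the stronger conclusion that the span sits in $X(T,E)\cup\{0\}$ (hitting sets of upper density one), which is what the double-genericity theorem actually needs; your construction, visiting targets only along a sparse checkpoint sequence, gives universality but not membership in $X(T,E)$.

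One condition in your scheme is overstated and, as written, unachievable: dormancy in the form ``$\omega_n(f_k)\to 0$ through all of $\N\setminus A_k$'' with $\N=\bigsqcup_k A_k$. At a level $j$ strictly between two checkpoints $m<n$, the value of $f_k$ at the ancestor $v$ of an exceptional vertex $u^*(x)$ is $\sum_{u}W(v,u)f_k(u)$, and the corrected term $W(v,u^*(x))f_k(u^*(x))$ is not small; moreover the sector $B_v$ can have measure comparable to $\P(B_x)$ (the transition probabilities $q$ are arbitrary), so $\omega_j(f_k)$ is uncontrolled on a set of non-negligible measure at such intermediate levels, no matter which $A_{k'}$ you assign $j$ to. This is not fatal: your conclusion only invokes dormancy of $f_k$, $k\neq k_0$, along the active levels of $A_{k_0}$, so it suffices to take the $A_k$ to be disjoint infinite sets of checkpoint levels (not a partition of $\N$) and to require at each checkpoint in $A_k$ that $f_k$ approximates its target while every other $f_{k'}$ approximates $0$, all via one simultaneous application of the flexibility lemma. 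With that restatement the argument goes through.
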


In this paper we improve this result by proving the following theorem.

\begin{theorem}
\label{thm:wanted}
There exist two vector spaces $F_1,F_2$ contained in $U(T,E)\cup\{0\}$ which are dense in $H(T,E)$ such that $F_1 \cap F_2 = \{0\}$, that is we have double algebraic genericity for the set $U(T,E)$.
\end{theorem}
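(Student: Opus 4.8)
The plan is to leverage the already-established single algebraic genericity (Theorem 1.1) and upgrade it to a pair of spaces intersecting trivially.The plan is to deduce \cref{thm:wanted} from the single algebraic genericity of \cite{BIEHLER2} by a soft splitting argument, so that all of the analytic content---namely that an entire subspace consists of universal functions---is imported and the only new work is of a geometric, bookkeeping nature. Write $X = H(T,E)$ with the subspace topology inherited from the product $E^T$. Since $E$ is separable and metrizable by a translation invariant metric and $T$ is countable, $X$ is separable and metrizable; moreover $X$ is infinite-dimensional, since the construction of harmonic functions leaves infinitely many independent degrees of freedom as one moves up the tree (each vertex has at least two children, so the averaging constraint at a parent underdetermines the values at its children). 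By \cite{BIEHLER2} there is a subspace $V \subseteq U(T,E)\cup\{0\}$ dense in $X$; in particular $V\setminus\{0\}\subseteq U(T,E)$, and being dense in the infinite-dimensional $X$, the subspace $V$ is itself infinite-dimensional.

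The heart of the matter is to split $V$ into two subspaces that are each dense and meet only at $0$. First I would fix a countable dense subset $\{v_m : m \in \N\}$ of $X$ and an enumeration $t : \N \to \N$ in which every value is attained infinitely often. Then I would construct, by induction on $k$, a linearly independent family $(u_k)_{k\in\N}$ in $V$ by choosing at step $k$
$$u_k \in V \cap B\!\left(v_{t(\lceil k/2\rceil)}, \tfrac{1}{k}\right) \setminus \Span\{u_1,\dots,u_{k-1}\},$$
where $B(v,\e)$ is the open ball of radius $\e$. Such a choice is possible because the ball is open and nonempty in the infinite-dimensional space $X$, the finite-dimensional subspace $\Span\{u_1,\dots,u_{k-1}\}$ is closed with empty interior, and $V$ is dense, so the ball meets $V$ outside that subspace. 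Routing both the odd and the even steps through the same exhaustive list of targets forces both subfamilies $(u_{2j-1})_{j}$ and $(u_{2j})_{j}$ to approximate every $v_m$ arbitrarily well, so each of them is dense in $X$, while the full family $(u_k)_k$ is linearly independent by construction.

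Finally I would set $F_1 = \Span\{u_{2j-1} : j \in \N\}$ and $F_2 = \Span\{u_{2j} : j \in \N\}$. Both are dense in $X = H(T,E)$, since each contains a dense subfamily. They satisfy $F_1 \cap F_2 = \{0\}$: a nonzero common vector would be simultaneously a finite $\F$-linear combination of odd-indexed and of even-indexed $u_k$, yielding a nontrivial dependence among the $u_k$ and contradicting their independence. And since $F_1, F_2 \subseteq V \subseteq U(T,E)\cup\{0\}$, every nonzero element of each $F_i$ lies in $V\setminus\{0\}\subseteq U(T,E)$, so $F_1, F_2 \subseteq U(T,E)\cup\{0\}$, as required.

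I expect the main obstacle to be the simultaneous control in the inductive step: keeping the entire family $(u_k)_k$ linearly independent while forcing \emph{both} halves to be dense. This rests on the one genuinely needed geometric fact, that in $X$ every open ball meets the dense subspace $V$ outside any prescribed finite-dimensional subspace; the only point meriting care is the closedness of finite-dimensional $\F$-subspaces, which is automatic when $\F\in\{\R,\C\}$ and should be checked (or assumed) for the topological field $\F$ at hand. If one prefers not to invoke \cite{BIEHLER2} as a black box, the same bookkeeping can instead be grafted onto its construction: one re-runs that argument to produce a single linearly independent family whose nonzero finite combinations are all universal and whose odd- and even-indexed parts are separately dense, and then defines $F_1,F_2$ exactly as above.
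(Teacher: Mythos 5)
Your route is genuinely different from the paper's. In the classical setting ($\F=\R$ or $\C$, where every Hausdorff topological vector space has closed finite-dimensional subspaces and every proper closed subspace is nowhere dense) your splitting argument is correct and arguably more elementary: you take the single dense subspace $V\subseteq U(T,E)\cup\{0\}$ from \cite{BIEHLER2} as a black box and manufacture two dense spans of complementary halves of one linearly independent family. The paper instead locates $F_1$ and $F_2$ inside two \emph{disjoint} subclasses of $U(T,E)$: the frequently universal functions $U_{FM}(T,E)$, whose approximating index sets have positive lower density, and the class $X(T,E)$, whose approximating index sets have upper density one. Disjointness of the two classes (\cref{prop:disj}) then yields $F_1\cap F_2=\{0\}$ with no linear-independence bookkeeping at all; the real work is transferred to proving algebraic genericity of $X(T,E)$ in the general setting (\cref{thm:algX}), which is done via the auxiliary space $E^{\N}$.

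The catch is that the paper works over an arbitrary metrizable topological field $\F$, and there the two geometric inputs you rely on genuinely fail. First, a dense subspace of an infinite-dimensional space need not be infinite-dimensional: $\Q$ is a dense one-dimensional $\Q$-subspace of the topological $\Q$-vector space $\R$ (which is separable and metrized by a translation-invariant metric, so it satisfies all the paper's hypotheses on $E$). Second, and by the same example, $V\cap B$ can be entirely contained in a finite-dimensional subspace --- there $V\cap B\subseteq\Span\{u_1\}$ for any nonzero $u_1\in V$ --- so the inductive choice of $u_k$ outside $\Span\{u_1,\dots,u_{k-1}\}$ can already be impossible at $k=2$. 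You flag closedness of finite-dimensional subspaces as a point to be ``checked (or assumed),'' but it is not a formality here: it is precisely the obstruction that the paper's disjoint-classes strategy is designed to sidestep, and nothing in the paper's hypotheses on $\F$ rules out the pathology. So as written your argument proves the theorem under additional assumptions on $\F$ (for instance $\F\in\{\R,\C\}$, the setting of the earlier preprint \cite{KONIDAS}), but not in the stated generality; to repair it you would either need to impose such assumptions or verify from the construction in \cite{BIEHLER2} that the specific $V$ admits the required linearly independent, doubly dense family, which your density-only argument does not establish.
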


A weaker version of this theorem was proven in \cite{KONIDAS}, where $E$ was assumed to be a normed space over $\C$.
In this paper we generalise that proof for $E$ a topological vector space over a field $\F$, satisfying of course some technical assumptions which are stated at length in the sequel.

We begin by demonstrating that $L^0(\partial T, E)$ is a topological vector space over $\F$.
Then, in order to prove our result, we consider two disjoint subsets of $U(T,E)$ which have been studied before.
The first is the set of frequently universal harmonic functions $U_{FM}(T,E)$ (see \cref{def:UFM}) for which algebraic genericity has been proven in \cite{BIEHLER2} with the same assumptions as the ones we are considering for $E$.
The second is the set denoted as $X(T,E)$ (see \cref{def:X}) for which algebraic genericity has only been proven for  $E$ a normed space over $\C$, in \cite{KONIDAS}.

Our aim is to prove algebraic genericity for the set $X(T,E)$ in the general case.
The main idea is to consider $E^\N$, which is also a topological vector space over $\F$, and the set $X(T,E^\N)$.
We prove that there exists a function $f = (f_1,f_2, \dots) \in X(T,E^\N)$ such that the sequence $(f_n)_{n=1}^{\infty}$ is dense in $H(T,E)$, and that the span of $f_1,f_2,\dots$ lies in $X(T,E)$.

More information about universal harmonic functions on trees can be found in the articles \cite{ABAKUMOV1, ABAKUMOV2}.
Investigations in this paper are more closely related to results found in the articles \cite{BIEHLER1,BIEHLER2} and in the preprint \cite{KONIDAS}.
The concept of algebraic genericity is further discussed in \cite{LINEARITY}.

\section{Preliminaries}

Let $\F$ be a field endowed with a metric such that addition, multiplication and the inverse function are continuous.
Let $E$ be a separable topological vector space over the field $\F$ that is metrizable by a translation invariant metric $d$.
We consider the space $L^0(\partial T, E)$, obtained by identifying $\P$-almost everywhere $\mathcal{M}$-measurable functions, and the metric
\begin{equation*}
\label{eq:P}
P(\psi, \phi) = \int_{\partial T}{\frac{d(\psi(e), \phi(e))}{1 + d(\psi(e), \phi(e))}} \, d\P(e) \tag{$\star$}
\end{equation*}
where $\psi,\phi \in L^0(\partial T, E)$.
The topology induced by the metric $P$ is that of convergence in probability.
Since $E$ is separable and by construction of the measure space $(\partial T, \mathcal{M}, \P)$, there exists a sequence $(\chi_n)_{n=1}^{\infty}$ dense in $L^0(\partial T, E)$ such that for all $n \in \N$ the function $\chi_n$ is $\mathcal{M}_{k(n)}$-measurable for some $k(n) \in \N$.

In general, the set of measurable functions from a measurable space to an arbitrary topological vector space needs not be a vector space.
In our case however the process of constructing the measurable space $(\partial T, \mathcal{M})$ yields the required structure.
The next proposition contains the main results about the structure of $L^0(\partial T, E)$.

\begin{proposition}
\label{prop:structure}
The following are true.

\begin{enumerate}[(i)]
\item The set $L^0(\partial T, E)$ considered with pointwise addition and $\F$-scalar multiplication is a vector space over $\F$.

\item Addition and scalar multiplication in $L^0(\partial T, E)$ are continuous with respect to convergence in probability.

\end{enumerate}
\end{proposition}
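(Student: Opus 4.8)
The plan is to treat the two assertions separately. For (i), the vector-space axioms hold pointwise in $E^{\partial T}$ and will descend to the quotient, so the only delicate point is that the pointwise sum and scalar multiples of $\mathcal{M}$-measurable functions remain $\mathcal{M}$-measurable. I would first record that each $\mathcal{M}_n$ is a finite $\sigma$-algebra whose atoms are exactly the sets $B_x$, $x \in T_n$, so that an $\mathcal{M}_n$-measurable function is one that is constant on each such atom; for these finitely-valued functions closure under addition and scalar multiplication is immediate, the operations acting separately on the finitely many atom-values. For arbitrary $\mathcal{M}$-measurable $\psi,\phi$ I would exploit the separability of $E$: a separable metric space is second countable, so $\mathcal{B}(E \times E) = \mathcal{B}(E) \otimes \mathcal{B}(E)$, whence $e \mapsto (\psi(e),\phi(e))$ is $\mathcal{M}$-measurable into $(E\times E,\mathcal{B}(E\times E))$; composing with the continuous, hence Borel, addition map $E \times E \to E$ gives measurability of $\psi + \phi$, and composing $\psi$ with the continuous map $u \mapsto \lambda u$ gives measurability of $\lambda \psi$. (Alternatively one approximates $\psi,\phi$ pointwise by countably-valued measurable functions built from a dense sequence of $E$ and passes to the limit using completeness of $\mathcal{M}$; this route avoids conditional expectations, which need not exist for a general $E$.) Since the functions vanishing $\P$-almost everywhere form a linear subspace, the operations are well defined on equivalence classes and (i) follows.

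For the continuity of addition in (ii), I would argue directly with $P$. Translation invariance of $d$ gives the pointwise bound $d(\psi(e)+\phi(e),\psi'(e)+\phi'(e)) \le d(\psi(e),\psi'(e)) + d(\phi(e),\phi'(e))$, and since $t \mapsto t/(1+t)$ is increasing and subadditive, integrating yields $P(\psi+\phi,\psi'+\phi') \le P(\psi,\psi') + P(\phi,\phi')$. This single inequality delivers joint continuity of addition at once.

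The hard part will be joint continuity of scalar multiplication: given $\lambda_n \to \lambda$ in $\F$ and $\psi_n \to \psi$ in probability, I must show $\lambda_n\psi_n \to \lambda\psi$ in probability. I would write $\lambda_n\psi_n - \lambda\psi = (\lambda_n-\lambda)\psi + \lambda_n(\psi_n-\psi)$ and treat the two summands via the subadditivity estimate above. For the first, pointwise continuity of scalar multiplication in $E$ gives $(\lambda_n-\lambda)\psi(e) \to 0$ for every $e$, and as the integrand is bounded by $1$, dominated convergence forces this term to $0$ in $L^0$. The genuine obstacle is the second term, where the scalars and the functions move simultaneously; here I would invoke the standard topological-vector-space fact that for the compact set $K = \{\lambda\} \cup \{\lambda_n : n \in \N\}$ and any $\delta > 0$ there is $\eta > 0$ with $\mu u \in \{u : d(u,0) < \delta\}$ whenever $\mu \in K$ and $d(u,0) < \eta$ --- proved by covering $K$ through continuity of scalar multiplication at each $(\mu,0)$ and compactness, which is valid over any topological field. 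This gives the inclusion $\{e : d(\lambda_n(\psi_n-\psi)(e),0) \ge \delta\} \subseteq \{e : d((\psi_n-\psi)(e),0) \ge \eta\}$, so the probability on the left tends to $0$, i.e.\ the second term tends to $0$ in probability. Combining the two summands completes (ii).
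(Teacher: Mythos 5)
Your proof is correct, but both halves take a genuinely different route from the paper's. For (i), the paper does not use the product-Borel argument: it takes the dense sequence $(\chi_n)$ of functions, each measurable with respect to some finite $\sigma$-algebra $\mathcal{M}_{k(n)}$, approximates $\psi$ and $\phi$ pointwise almost everywhere by such atom-constant functions, observes that sums of atom-constant functions are again atom-constant (hence measurable), and concludes by completeness of $(\partial T, \mathcal{M}, \P)$ that the a.e.\ limit $\psi+\phi$ is measurable. Your identification $\mathcal{B}(E\times E)=\mathcal{B}(E)\otimes\mathcal{B}(E)$ via second countability, followed by composition with the continuous addition map, is the standard argument and is shorter; it also shows that only separability and metrizability of $E$ matter here, not the particular construction of $(\partial T,\mathcal{M})$ that the paper emphasizes. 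For (ii), the paper argues by the subsequence principle: from any subsequence of $(a_n\psi_n)$ it extracts a further subsequence along which $\psi_{k_{m_n}}\to\psi$ almost everywhere, applies pointwise continuity of scalar multiplication in $E$ to get a.e.\ (hence in-probability) convergence of that sub-subsequence, and uses metrizability of convergence in probability to conclude. Your argument is direct and quantitative: the inequality $P(\psi+\phi,\psi'+\phi')\leq P(\psi,\psi')+P(\phi,\phi')$ (which the paper never records, though it is a clean consequence of translation invariance of $d$ and subadditivity of $t\mapsto t/(1+t)$), the splitting $\lambda_n\psi_n-\lambda\psi=(\lambda_n-\lambda)\psi+\lambda_n(\psi_n-\psi)$, dominated convergence for the first term, and the compactness/equicontinuity estimate over $K=\{\lambda\}\cup\{\lambda_n:n\in\N\}$ for the second. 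Both routes are valid; yours buys an explicit subadditivity estimate for $P$ and avoids subsequence extraction, while the paper's recycles the same pointwise-continuity-plus-subsequence machinery uniformly for both operations.
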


\begin{proof}\leavevmode

\begin{enumerate}[(i)]
\item It suffices to show that if $\psi:\partial T\to E$ and $\phi: \partial T \to E$ are two $\mathcal{M}$-measurable functions and $a \in \F$ is arbitrary, then the functions $\psi + \phi$ and $a\psi$ are $\mathcal{M}$-measurable.
Since the set $\{\chi_n : n \in \N\}$ is dense in $L^0(\partial T, E)$ we can find a sequence in that set that convergences to $\psi$ in probability.
We can then find a subsequence of that sequence that convergences to $\psi$ pointwise almost everywhere.
By reindexing we can find a sequence $(\psi_n)_{n=1}^{\infty}$ in the set $\{\chi_n : n \in \N\}$ such that $\psi_n \to \psi$ pointwise almost everywhere and each function $\psi_n$ is $\mathcal{M}_{\tau(n)}$-measurable for some $\tau(n) \in \N$.
By the same argument we can find a sequence $(\phi_n)_{n=1}^{\infty}$ in the set $\{\chi_n : n \in \N\}$ such that $\phi_n \to \phi$ pointwise almost everywhere and each function $\phi_n$ is $\mathcal{M}_{\ell(n)}$-measurable for some $\ell(n) \in \N$.
Thus there exists a set $A \in \mathcal{M}$ with $\P(A) = 1$ such that $\psi_n(e) \to \psi(e)$ and $\phi_n(e) \to \phi(e)$ for all $e \in A$.
Since addition in $E$ is continuous $\psi_n(e) + \phi_n(e) \to \psi(e) + \phi(e)$ for all $e \in A$.
By setting $r(n) = \max\{\tau(n), \ell(n)\}$ for all $n \in \N$ we have that $\mathcal{M}_{\tau(n)} \subseteq \mathcal{M}_{r(n)} $ and $\mathcal{M}_{\ell(n)} \subseteq \mathcal{M}_{r(n)}$ for all $n \in \N$.
Therefore, both $\psi_n$ and $\phi_n$ are $\mathcal{M}_{r(n)}$-measurable for all $n \in \N$ and so they are constant on all the sets $B_x$ for $x \in T_{r(n)}$.
This implies that the function $\psi_n + \phi_n$ is constant on all the sets $B_x$ for $x \in T_{r(n)}$.
Thus, the function $\psi_n +\phi_n$ is $\mathcal{M}_{r(n)}$-measurable and therefore $\mathcal{M}$-measurable.
So, we have that $\psi + \phi$ is an almost everywhere pointwise limit of the sequence of the $\mathcal{M}$-measurable functions $(\psi_n + \phi_n)_{n=1}^{\infty}$, taking values over the metric space $(E,d)$.
Also, the measure space $(\partial T, \mathcal{M}, \P)$ is complete.
These imply that the function $\psi+\phi$ is $\mathcal{M}$-measurable.
Proving that $a\psi$ is $\mathcal{M}$-measurable relies on scalar multiplication in $E$ being continuous, but the proof is similar and thus omitted.

\item We prove that scalar multiplication in $L^0(\partial T, E)$ is continuous.
To this end, let $a \in \F$ and $\psi \in L^0(\partial T, E)$ be arbitrary such that there exist a sequence $(a_n)_{n=1}^{\infty}$ in $\F$ with $a_n \to a$ and a sequence $(\psi_n)_{n=1}^{\infty}$ in $L^0(\partial T, E)$ with $\psi_n \to \psi$ in probability.
It suffices to show that $a_n\psi_n \to a\psi$ in probability.
Let $(a_{k_n}\psi_{k_n})_{n=1}^{\infty}$ be an arbitrary subsequence of $(a_n\psi_n)_{n=1}^{\infty}$.
Then $\psi_{k_n} \to \psi$ in probability as $(\psi_{k_n})_{n=1}^{\infty}$ is a subsequence of $(\psi_{n})_{n=1}^{\infty}$.
Thus there exists a further subsequence $(\psi_{k_{m_n}})_{n=1}^{\infty}$ of $(\psi_{k_n})_{n=1}^{\infty}$ such that $\psi_{k_{m_n}} \to \psi$ pointwise almost everywhere.
That is, there exists an $\mathcal{M}$-measurable set $A \subseteq \partial T$ with $\P(A) = 1$ such that for all $e \in A$ we have $\psi_{k_{m_n}}(e) \to \psi(e)$ in $E$.
Now, $(a_{k_{m_n}})_{n=1}^{\infty}$ is a subsequence of $(a_{n})_{n=1}^{\infty}$ and thus $a_{k_{m_n}} \to a$ in $\F$.
Scalar multiplication in $E$ is continuous, therefore for every $e \in A$ we have that $a_{k_{m_n}}\psi_{k_{m_n}}(e) \to a\psi(e)$ in $E$, that is, $a_{k_{m_n}}\psi_{k_{m_n}} \to a\psi$ pointwise almost everywhere.
In particular $a_{k_{m_n}}\psi_{k_{m_n}} \to a\psi$ in probability.
Convergence in probability in $L^0(\partial T, E)$ is induced by the metric $P$.
Thus the previous arguments prove that $a_n\psi_n \to a\psi$ in probability.
Proving that addition in $L^0(\partial T, E)$ is continuous relies on addition in $E$ being continuous, but the proof is similar and thus omitted.

\end{enumerate}
\end{proof}

We now consider the space $E^\N$ equipped with the product metric defined by the metric $d$ in each copy of $E$.
This way $E^\N$ becomes a separable topological vector space over $\F$, metrizable by the translation invariant product metric.
Thus we can define the set $L^0(\partial T, E^\N)$.

We identify each element $\psi \in L^0(\partial T, E^\N)$ with a sequence $(\psi_n)_{n=1}^{\infty}$ of functions $\psi_n: \partial T \to E$, by setting $\psi_n(e) = y_n$ for all $e \in \partial T$ when $\psi(e) = (y_n)_{n=1}^{\infty}$.
The next proposition examines the relation between $L^0(\partial T, E^\N)$ and $L^0(\partial T,E)^\N$.

\begin{proposition}
\label{prop:product}
The following are true.

\begin{enumerate}[(i)]

\item We have $L^0(\partial T, E^\N) = L^0(\partial T, E)^\N$. Equivalently, $\psi \in L^0(\partial T, E^\N)$ if and only if $\psi_n \in L^0(\partial T, E)$ for all $n \in \N$.

\item The topology of convergence in probability on $L^0(\partial T, E^\N)$ is the same as the product topology on $L^0(\partial T, E)^\N$, obtained when considering each copy of $L^0(\partial T, E)$ with the topology of convergence in probability.
Equivalently, for all sequences $(\psi^m)_{m=1}^{\infty}$ in $L^0(\partial T, E^\N)$ and all $\psi \in L^0(\partial T, E^\N)$, it is $\psi^m \to \psi$ in probability as $m \to \infty$ if and only if $\psi_n^m \to \psi_n$ in probability as $m \to \infty$ for all $n\in \N$.
\end{enumerate}
\end{proposition}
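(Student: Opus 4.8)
The plan is to treat the two assertions separately: statement (i) is purely about measurability, while statement (ii) is a comparison of the two metrics inducing the respective topologies. For (i), the identity $L^0(\partial T, E^\N) = L^0(\partial T, E)^\N$ unwinds to the claim that a function $\psi : \partial T \to E^\N$ is $\mathcal{M}$-measurable if and only if each coordinate $\psi_n : \partial T \to E$ is $\mathcal{M}$-measurable. The forward implication is immediate: each projection $\pi_n : E^\N \to E$ is continuous, so $\psi_n = \pi_n \circ \psi$ inherits $\mathcal{M}$-measurability from $\psi$. The substantive direction is the converse. Here I would exploit that $E$, being separable and metrizable, is second countable, so that $E^\N$ with the product topology is second countable as well. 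Fixing a countable base $(W_j)$ of $E$, the finite intersections of the cylinders $\pi_n^{-1}(W_j)$ form a countable base $(V_i)$ of $E^\N$. For each such cylinder one has $\psi^{-1}\!\big(\pi_n^{-1}(W_j)\big) = \psi_n^{-1}(W_j) \in \mathcal{M}$, whence $\psi^{-1}(V_i) \in \mathcal{M}$ as a finite intersection, and since every open $V \subseteq E^\N$ is a countable union of the $V_i$, we conclude $\psi^{-1}(V) \in \mathcal{M}$. This is essentially the statement that the Borel $\sigma$-algebra of a countable product of separable metric spaces coincides with the product $\sigma$-algebra, and second countability (coming from separability of $E$) is exactly what makes it hold.

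For (ii), I would write the product metric as $D(x,y) = \sum_{n=1}^{\infty} 2^{-n}\frac{d(x_n,y_n)}{1+d(x_n,y_n)}$, noting that any metric inducing the product topology serves equally well. I would use the standard fact that convergence in probability with respect to a target metric $\rho$ is equivalent to $\P(\{\rho(\psi^m,\psi) > \e\}) \to 0$ for every $\e > 0$, and that this is precisely what the integral metric $P$ of \eqref{eq:P} metrizes, both for $D$ on $E^\N$ and for $d$ on each factor. Abbreviating $g_n^m = \frac{d(\psi_n^m,\psi_n)}{1+d(\psi_n^m,\psi_n)}$, the two directions then reduce to elementary estimates. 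For the forward direction, the lower bound $D(x,y) \geq 2^{-n}\frac{d(x_n,y_n)}{1+d(x_n,y_n)}$ gives the inclusion $\{g_n^m > \delta\} \subseteq \{D(\psi^m,\psi) > 2^{-n}\delta\}$ for every $\delta > 0$, so convergence in probability in $E^\N$ forces coordinatewise convergence in probability at once.

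For the reverse direction, fix $\e > 0$ and choose $N$ with $\sum_{n>N} 2^{-n} < \e/2$; since each $g_n^m \leq 1$, the tail of the series contributes at most $\e/2$, so on the event $\{D(\psi^m,\psi) > \e\}$ one must have $\sum_{n=1}^{N} 2^{-n} g_n^m > \e/2$, which in turn forces $g_n^m > \e/2$ for at least one $n \leq N$. This yields the inclusion $\{D(\psi^m,\psi) > \e\} \subseteq \bigcup_{n=1}^{N}\{g_n^m > \e/2\}$, and the probability of the right-hand side tends to $0$ as $m \to \infty$ by the assumed coordinatewise convergence in probability, completing the argument.

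I expect the only genuine obstacle to be the converse half of (i): for an arbitrary topological vector space target, coordinatewise measurability need not imply measurability into the product, and it is precisely the separability of $E$, via second countability, that rescues the implication. Once that point is settled, part (ii) is a matter of the two one-sided metric estimates above, the forward one using a single-coordinate lower bound for $D$ and the reverse one using the tail-summability of the geometric weights to truncate to finitely many coordinates.
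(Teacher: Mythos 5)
Your argument is correct in both parts. Part (i) follows the paper essentially verbatim: the forward implication via preimages of cylinders (you phrase it through continuity of the projections $\pi_n$, the paper writes the cylinder set explicitly), and the converse via second countability of $E$, hence of $E^\N$, reducing measurability to preimages of a countable base of finite-dimensional rectangles, which are finite intersections of the sets $\psi_n^{-1}(W_j)\in\mathcal{M}$. You correctly isolate separability of $E$ as the hypothesis that makes the converse work.

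For part (ii) you take a genuinely different, though equally standard, route. The paper stays at the level of the integral metrics: it applies Tonelli to obtain the identity $\widetilde{P}(\psi,\phi)=\sum_{n=1}^{\infty}2^{-n}P(\psi_n,\phi_n)$, then gets the forward direction from the sandwich $P(\psi^m_{n_0},\psi_{n_0})\leq 2^{n_0}\widetilde{P}(\psi^m,\psi)$ and the reverse direction from dominated convergence for the series. You instead pass to the event-level characterization $\P(\{\rho>\e\})\to 0$ of convergence in probability and prove the two directions by set inclusions: $\{g_n^m>\delta\}\subseteq\{D(\psi^m,\psi)>2^{-n}\delta\}$ one way, and the tail truncation $\{D(\psi^m,\psi)>\e\}\subseteq\bigcup_{n\leq N}\{g_n^m>\e/2\}$ the other way, which is valid since $\sum_{n=1}^N 2^{-n}g_n^m\leq\max_{n\leq N}g_n^m$. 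The paper's approach buys a clean reusable metric identity relating $\widetilde{P}$ to the coordinate metrics $P$ and avoids invoking the equivalence between the integral metric and exceedance probabilities (though it does implicitly use that $\widetilde P$ metrizes convergence in probability because the product metric is bounded); your approach is more elementary in that it needs no interchange of sum and integral, at the cost of routing everything through that standard equivalence, which you should state with a one-line Markov-type justification if writing this out in full.
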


\begin{proof}\leavevmode

\begin{enumerate}[(i)]

\item Let us first assume that $\psi \in L^0(\partial T, E^\N)$.
We fix arbitrary $n \in \N$ and show that $\psi_n \in L^0(\partial T, E)$.
Let $V$ be an open subset of $E$.
If $$W = \dots \times E \times V \times E \times \dots$$ where $V$ is in the $n$-th position and every other set is $E$, then $W$ is an open subset of $E^\N$.
Since the function $\psi$ is $\mathcal{M}$-measurable $\psi^{-1}(W) \in \mathcal{M}$.
But $\psi^{-1}(W) = \psi_n^{-1}(V)$ and thus $\psi_n^{-1}(V) \in \mathcal{M}$.
Therefore the function $\psi_n$ belongs in $L^0(\partial T, E)$.
Conversely, let us assume that $\psi_n \in L^0(\partial T, E)$ for every $n \in \N$.
The topological vector space $E$ is separable and metrizable, thus it is second countable and so there exists a countable basis $\mathcal{C}$ of its topology.
Without loss of generality, we assume that $E \in \mathcal{C}$.
Then the set
$$\mathcal{B} = \left\{ \prod_{m \in \N}{V_m} : V_m \in \mathcal{C} \text{ and the set } \{m \in \N : V_m \neq E\} \text{ is finite} \right\}$$
is a countable basis for the topology of $E^\N$.
Thus the family $\mathcal{B}$ generates the Borel $\sigma$-algerba on $E^\N$ and so to prove that $\psi \in L^0(\partial T, E^\N)$, it suffices to show that $\psi^{-1}(W) \in \mathcal{M}$ for all $W \in \mathcal{B}$.
Let $W \in \mathcal{B}$.
Then there exists $n \in \N$ and $V_1,\dots,V_n \in \mathcal{C}$ such that
$$W = V_1 \times V_2 \times \dots \times V_n \times E \times E \times \dots.$$
We have
$$\psi^{-1}(W) = \psi_1^{-1}(V_1)\cap \psi_2^{-1}(V_2)\cap \dots \cap \psi_n^{-1}(V_n) \in \mathcal{M},$$
since the functions $\psi_1,\dots,\psi_n$ are $\mathcal{M}$-measurable.
Therefore $\psi \in L^0(\partial T, E^\N)$.

\item
We consider the metric $\widetilde{P}$ in $L^0(\partial T, E^\N)$ defined as
$$\widetilde{P}(\psi, \phi) = \int_{\partial T}{ \sum_{n=1}^{\infty}{\frac{1}{2^n}\frac{d(\psi_n(e),\phi_n(e))}{1 + d(\psi_n(e),\phi_n(e))} }} \, d\P(e) $$
for $\psi, \phi \in L^0(\partial T, E^\N)$.
Because the product metric in $E^\N$ is bounded by one, the topology induced by $\widetilde{P}$ is that of convergence in probability.
We notice that for every $\psi, \phi \in L^0(\partial T, E^\N)$, all $n \in \N$ and all $e \in \partial T$ it is
$$ \frac{1}{2^n}\frac{d(\psi_n(e),\phi_n(e))}{1 + d(\psi_n(e),\phi_n(e))} \geq 0.$$
Tonelli's theorem implies that
$$\widetilde{P}(\psi, \phi) = \sum_{n=1}^{\infty}{ \frac{1}{2^n} \int_{\partial T}{\frac{d(\psi_n(e),\phi_n(e))}{1 + d(\psi_n(e),\phi_n(e))}} \, d\P(e)}.$$
Therefore, recalling \cref{eq:P}, we have
$$\widetilde{P}(\psi, \phi) = \sum_{n=1}^{\infty}{\frac{P(\psi_n, \phi_n)}{2^n}}$$
for all $\psi,\phi \in L^0(\partial T, E^\N)$.
Now, let $(\psi^m)_{m=1}^{\infty}$ be a sequence in $L^0(\partial T, E^\N)$ and $\psi \in L^0(\partial T, E^\N)$.
Let us first assume that $\psi^m \to \psi$ in probability as $m \to \infty$, that is $\widetilde{P}(\psi^m , \psi) \to 0$ as $m \to \infty$.
Fix $n_0 \in \N$.
For all $m \in \N$ we have
$$0 \leq P(\psi^m_{n_0}, \psi_{n_0}) \leq 2^{n_0}\sum_{n=1}^{\infty}{\frac{P(\psi^m_{n}, \psi_{n})}{2^n}} = 2^{n_0}\widetilde{P}(\psi^m , \psi).$$
Therefore $P(\psi^m_{n_0}, \psi_{n_0}) \to 0 $ as $m \to \infty$, that is $\psi^m_{n_0} \to \psi_{n_0}$ in probability as $m \to \infty$.
Conversely, let us assume that $\psi^m_{n} \to \psi_n$ in probability as $m \to \infty$ for all $n \in \N$, that is $P(\psi^m_n, \psi_n) \to 0$ as $m \to \infty$ for all $n \in \N$.
For all $m \in \N$ and all $n \in \N$ we have $P(\psi^m_n, \psi_n) \leq 1$ and so
$$\frac{P(\psi^m_n, \psi_n)}{2^n} \leq \frac{1}{2^n} \text{~~~and~~~}
\sum_{n=1}^{\infty}{\frac{1}{2^n}} < \infty.$$
Lebesgue's dominated convergence theorem implies that
$$\lim_{m\to \infty}{ \sum_{n=1}^{\infty} { \frac{P(\psi^m_{n}, \psi_{n})}{2^n} } } =
\sum_{n=1}^{\infty} { \lim_{m\to \infty}{\frac{P(\psi^m_{n}, \psi_{n})}{2^n}}} = 0.$$
But for all $m \in \N$
$$\widetilde{P}(\psi^m, \psi) = \sum_{n=1}^{\infty} { \frac{P(\psi^m_{n}, \psi_{n})}{2^n} }.$$
Therefore $\widetilde{P}(\psi^m, \psi) \to 0$ as $m\to \infty$, which means that $\psi^m \to \psi$ in probability as $m \to \infty$.
\end{enumerate}
\end{proof}

Finally, we consider the topology of pointwise convergence in the space $E^T$ and its subset $H(T,E)$, which is induced by the metric
$$\rho(f,g) = \sum_{n=1}^{\infty}{\frac{1}{2^n}\frac{d(f(z_n), g(z_n))}{1 + d(f(z_n), g(z_n))}},$$
where $f,g \in E^T$ and $\{z_n:n \in \N\}$ is an enumeration of $T$.

\section{Double algebraic genericity}

Let $f \in H(T,E)$ be a harmonic function.
We have defined $f$ to be universal if and only if the sequence $(\omega_n(f))_{n=1}^{\infty}$ is dense in $L^0(\partial T, E)$.
This is equivalent to the set $\{n \in \N: \omega_n(f) \in V\}$ being nonempty for every nonempty open $V \subseteq L^0(\partial T, E)$.
As proven in \cite{BIEHLER2}, the space $L^0(\partial T, E)$ has no isolated points, and so $f \in U(T,E)$ if and only if the set $\{n \in \N: \omega_n(f) \in V\}$ is infinite for every nonempty $V \subseteq L^0(\partial T, E)$ open.

This observation motivates the following two definitions.

\begin{definition}
\label{def:UFM}
A harmonic function $f \in H(T,E)$ is said to be frequently universal if and only if for every nonempty open set $V \subseteq L^0(\partial T, E)$ the set $\{n \in \N: \omega_n(f) \in V\}$ has strictly positive lower density.
The set of frequently universal functions is denoted by $U_{FM}(T, E)$.
\end{definition}

\begin{definition}
\label{def:X}
A harmonic function $f \in H(T,E)$ is said to belong to the class $X(T,E)$ if and only if for every nonempty open set $V \subseteq L^0(\partial T, E)$ the set $\{n \in \N: \omega_n(f) \in V\}$ has upper density equal to one.
\end{definition}

Clearly $U_{FM}(T,E) \subseteq U(T,E)$ and $X(T,E) \subseteq U(T,E)$.
We now provide an alternative definition for the set $X(T,E)$.

\begin{proposition}
\label{prop:X}
A harmonic function $f \in H(T,E)$ belongs to the class $X(T,E)$ if and only if for every nonempty non-dense open set $V \subseteq L^0(\partial T, E)$ the set $\{n \in \N : \omega_n(f) \in V\}$ has lower density equal to zero.
\end{proposition}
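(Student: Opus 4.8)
The plan is to reduce the equivalence to two elementary facts about densities together with one topological construction. For an open set $V \subseteq L^0(\partial T, E)$ write $A_V = \{n \in \N : \omega_n(f) \in V\}$, and recall the two bookkeeping facts I will use repeatedly: first, that $\overline{d}(A) = 1 - \underline{d}(\N \setminus A)$ for every $A \subseteq \N$ (immediate from $\abs{A \cap \{1,\dots,N\}} + \abs{(\N\setminus A)\cap\{1,\dots,N\}} = N$, after dividing by $N$ and using $\limsup_N(1-x_N) = 1 - \liminf_N x_N$ on the complementary ratios); and second, that $\underline{d}$ is monotone under inclusion. Since $\N \setminus A_V = \{n : \omega_n(f) \in L^0(\partial T, E)\setminus V\}$, the condition $\overline{d}(A_V) = 1$ is exactly $\underline{d}(\N\setminus A_V) = 0$. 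So everything comes down to converting between complements of open sets and complements of densities.

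For the forward implication, assume $f \in X(T,E)$ and let $W$ be a nonempty non-dense open set. Since $W$ is not dense, $\cl W \neq L^0(\partial T, E)$, so $V := L^0(\partial T, E)\setminus \cl W$ is a nonempty open set. Applying the definition of $X(T,E)$ to $V$ gives $\overline{d}(A_V) = 1$, hence $\underline{d}(\N \setminus A_V) = 0$ by the identity above. As $W \subseteq \cl W$, I have $A_W \subseteq \{n : \omega_n(f) \in \cl W\} = \N \setminus A_V$, and monotonicity of $\underline{d}$ yields $\underline{d}(A_W) = 0$, as wanted.

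For the converse, assume the lower-density condition holds for every nonempty non-dense open set and let $V$ be an arbitrary nonempty open set; I must show $\overline{d}(A_V) = 1$. If $V = L^0(\partial T, E)$ then $A_V = \N$ and there is nothing to prove, so suppose $L^0(\partial T, E)\setminus V$ is nonempty. The idea is to build a nonempty non-dense open set $W$ containing $L^0(\partial T, E)\setminus V$: choosing $x_0 \in V$ and $r > 0$ with the open $P$-ball $B(x_0, r) \subseteq V$, I set $W = L^0(\partial T, E)\setminus \cl B(x_0, r/2)$. Then $W$ is open; it contains $L^0(\partial T, E)\setminus V$ because $\cl B(x_0, r/2) \subseteq B(x_0,r) \subseteq V$; it is non-dense because the nonempty open ball $B(x_0, r/2)$ is disjoint from it; and it is nonempty since it contains $L^0(\partial T, E)\setminus V$. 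The hypothesis applied to $W$ gives $\underline{d}(A_W) = 0$, and from $\N \setminus A_V = \{n : \omega_n(f)\in L^0(\partial T, E)\setminus V\} \subseteq A_W$ together with monotonicity I conclude $\underline{d}(\N \setminus A_V) = 0$, i.e. $\overline{d}(A_V) = 1$.

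The only step that is more than routine density bookkeeping is the metric construction of $W$ in the converse: one must separate the closed set $L^0(\partial T,E)\setminus V$ from a protected open ball sitting inside $V$, which is precisely where metrizability of $L^0(\partial T, E)$ via the metric $P$ is used. I expect this to be the point needing the most care; everything else rests on the identity $\overline{d}(A) = 1 - \underline{d}(\N\setminus A)$ and the monotonicity of lower density.
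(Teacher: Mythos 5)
Your proof is correct and follows essentially the same route as the paper: the forward direction passes to the complement of $\cl W$ exactly as in the paper, and your explicit closed ball $\cl B(x_0, r/2)$ is just a concrete instance of the closed set $F \subseteq V$ with nonempty interior that the paper invokes via metrizability. No substantive difference.
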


\begin{proof}
Let us first assume that $f \in X(T,E)$.
Let $V$ be an arbitrary nonempty non-dense open subset of $L^0(\partial T, E)$.
Then $L^0(\partial T, E) \setminus \cl(V)$ is a nonempty open subset of $L^0(\partial T, E)$, where $\cl(V)$ is the closure of $V$ in $L^0(\partial T, E)$.
Because the function $f$ belongs to $X(T,E)$ the upper density of the set $\{n \in \N: \omega_n(f) \in L^0(\partial T,E)\setminus \cl(V) \}$ is equal to one and thus its complement $\{n \in \N: \omega_n(f) \in \cl(V) \}$ has lower density equal to zero.
Since the set $\{n \in \N: \omega_n(f) \in V\}$ is contained in the latter, we deduce that it has lower density equal to zero.
Conversely let us assume that $f \in H(T,E)$ is such that for every nonempty non-dense open set $V \subseteq L^0(\partial T, E)$ the set $\{n \in \N: \omega_n(f) \in V\}$ has lower density equal to zero.
Let $V$ be an arbitrary nonempty open subset of $L^0(\partial T, E)$.
The topology of convergence in probability in $L^0(\partial T, E)$ is metrizable and so there exists $F$ a closed subset of $L^0(\partial T, E)$ with nonempty interior such that $F \subseteq V$ and $F \neq L^0(\partial T, E)$.
Then $L^0(\partial T, E)\setminus F$ is a nonempty non-dense open subset of $L^0(\partial T,E)$.
By our assumption, the set $\{n \in \N: \omega_n(f) \in L^0(\partial T, E)\setminus F\}$ has lower density equal to zero.
Thus, its complement $\{ n \in \N: \omega_n(f) \in F\}$ has upper density equal to one.
The set $\{n \in \N: \omega_n(f) \in V\}$ contains the latter which implies that it has upper density equal to one.
Therefore, $f \in X(T,E)$.
\end{proof}

\cref{prop:X} and the existance of a nonempty non-dense open subset of $L^0(\partial T, E)$ lead to the proposition below, which has been proven differently in \cite{BIEHLER2}.

\begin{proposition}
\label{prop:disj}
The sets $U_{FM}(X,T)$ and $X(T,E)$ are disjoint, that is
$$U_{FM}(T, E) \cap X(T,E) = \varnothing.$$
\end{proposition}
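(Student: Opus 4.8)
The plan is to argue by contradiction, playing the two density requirements against each other on a single well-chosen test set. The engine is the reformulation of membership in $X(T,E)$ furnished by \cref{prop:X}: frequent universality forces the counting set $\{n : \omega_n(f) \in V\}$ to be ``large'' (strictly positive lower density) for \emph{every} nonempty open $V$, whereas membership in $X(T,E)$ forces that same counting set to be ``thin'' (lower density zero) for every nonempty \emph{non-dense} open $V$. Hence, as soon as a single nonempty non-dense open set is available, no function can meet both demands at once.

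First I would secure the existence of a nonempty non-dense open subset $V \subseteq L^0(\partial T, E)$. Since $E$ is a nontrivial topological vector space it contains at least two distinct points, so the constant functions already exhibit two distinct elements $\psi_0, \psi_1 \in L^0(\partial T, E)$ with $r := P(\psi_0, \psi_1) > 0$. Taking the open ball $V = \{\psi \in L^0(\partial T, E) : P(\psi, \psi_0) < r/2\}$, we obtain a nonempty open set whose closure is contained in the closed ball of radius $r/2$ about $\psi_0$; this closed ball omits $\psi_1$, so $V$ is non-dense. The fact that convergence in probability is metrized by $P$, recorded in the Preliminaries, is precisely what makes this step elementary.

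With such a $V$ fixed, suppose toward a contradiction that some $f$ lies in $U_{FM}(T,E) \cap X(T,E)$, and set $N = \{n \in \N : \omega_n(f) \in V\}$. On the one hand, $V$ is nonempty and open, so frequent universality of $f$ yields that $N$ has strictly positive lower density. On the other hand, $V$ is nonempty, open and non-dense, so \cref{prop:X} applied to $f \in X(T,E)$ yields that $N$ has lower density equal to zero. These two conclusions about the same set $N$ are incompatible, so no such $f$ exists and $U_{FM}(T,E) \cap X(T,E) = \varnothing$.

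I expect the only point requiring genuine care to be the first step, namely certifying that $L^0(\partial T, E)$ is not a single point so that a non-dense open ball is actually available; indeed, if $E$ were trivial the statement would fail, so the standing nontriviality of $E$ is exactly what is being used here. Everything after that is an immediate confrontation of the two definitions through \cref{prop:X}.
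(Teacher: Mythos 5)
Your argument is correct and is exactly the one the paper intends: it states that \cref{prop:X} together with the existence of a nonempty non-dense open subset of $L^0(\partial T, E)$ yields the proposition, which is precisely your contradiction between strictly positive lower density and lower density zero on the same counting set. Your explicit construction of such a set via two distinct constant functions and a ball of radius $r/2$ fills in a detail the paper leaves unstated, but the route is the same.
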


As stated in the introduction, our aim is to prove algebraic genericity for the set $X(T,E)$, which combined with algebraic genericity for the set $U_{FM}(T,E)$, a known result, leads to our main result, that is double algebraic genericity for the set $U(T,E)$.

The proof of the algebraic genericity for the set $X(T,E)$ is split into two lemmata.
For the proof, we work in the space $L^0(\partial T, E^\N)$, which by \cref{prop:structure} is a topological vector space over $\F$.
Furthermore, one can see that $f = (f_1,f_2, \dots) \in H(T,E^\N)$ if and only if $f_n \in H(T,E)$ for every $n \in \N$ and if $f = (f_1,f_2,\dots) \in H(T,E)$, then $\omega_n(f) = (\omega_n(f_1), \omega_n(f_2), \dots)$ for all $n \in \N$.

We begin by stating and proving the first lemma.

\begin{lemma}
\label{lem:linearity}
If $f = (f_1,f_2,\dots) \in X(T,E^{\N})$, then the vector space $\Span\{f_n : n\in \N\}$ is contained in the set $X(T,E) \cup \{0\}$.
\end{lemma}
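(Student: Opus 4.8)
The plan is to reduce membership of an arbitrary nonzero element of $\Span\{f_n : n\in\N\}$ in $X(T,E)$ to the hypothesis $f\in X(T,E^\N)$ by means of a single continuous linear surjection. First I would fix a nonzero $g \in \Span\{f_n : n\in\N\}$ and write $g = \sum_{i=1}^{k} a_i f_{n_i}$ with $n_1,\dots,n_k$ distinct and not all $a_i$ equal to zero; choose $i_0$ with $a_{i_0} \neq 0$. Since $f \in X(T,E^\N) \subseteq H(T,E^\N)$, each coordinate $f_{n_i}$ lies in $H(T,E)$, and as $H(T,E)$ is a vector space (its defining identity being linear), $g \in H(T,E)$. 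The map $\omega_n$ is linear, so $\omega_n(g) = \sum_{i=1}^{k} a_i \omega_n(f_{n_i})$ for every $n \in \N$.

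Next I would introduce the map $\Phi : L^0(\partial T, E^\N) \to L^0(\partial T, E)$ defined by $\Phi(\psi) = \sum_{i=1}^{k} a_i \psi_{n_i}$, where $\psi = (\psi_m)_{m=1}^\infty$; this is well defined into $L^0(\partial T,E)$ because each $\psi_{n_i} \in L^0(\partial T, E)$ by \cref{prop:product} and finite linear combinations remain in $L^0(\partial T,E)$ by \cref{prop:structure}. By \cref{prop:product}, convergence in probability in $L^0(\partial T, E^\N)$ is equivalent to coordinatewise convergence in probability, and by \cref{prop:structure} addition and scalar multiplication in $L^0(\partial T,E)$ are continuous; composing these facts shows $\Phi$ is a continuous linear map. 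Recalling that $\omega_n(f) = (\omega_n(f_1), \omega_n(f_2), \dots)$, whose $n_i$-th coordinate is $\omega_n(f_{n_i})$, I obtain the crucial identity $\Phi(\omega_n(f)) = \omega_n(g)$ for all $n \in \N$.

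The heart of the argument is that $\Phi$ has nonempty preimages of nonempty open sets; in fact $\Phi$ is surjective. Given any $\chi \in L^0(\partial T,E)$, I would define $\psi \in L^0(\partial T, E^\N)$ by placing $a_{i_0}^{-1}\chi$ in coordinate $n_{i_0}$ and $0$ in every other coordinate — this does lie in $L^0(\partial T, E^\N) = L^0(\partial T, E)^\N$ by \cref{prop:product}, using that $\F$ is a field so $a_{i_0}^{-1}$ exists — and then, since the indices $n_i$ are distinct, $\Phi(\psi) = a_{i_0} a_{i_0}^{-1}\chi = \chi$. Hence for any nonempty open $V \subseteq L^0(\partial T, E)$ the set $\Phi^{-1}(V)$ is a nonempty open subset of $L^0(\partial T, E^\N)$.

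Finally I would combine these. For a nonempty open $V \subseteq L^0(\partial T, E)$, the identity $\Phi(\omega_n(f)) = \omega_n(g)$ yields $\{n \in \N : \omega_n(g) \in V\} = \{n \in \N : \omega_n(f) \in \Phi^{-1}(V)\}$. As $\Phi^{-1}(V)$ is nonempty and open and $f \in X(T,E^\N)$, the right-hand set has upper density equal to one, hence so does the left-hand set. Since $V$ was an arbitrary nonempty open set, $g \in X(T,E)$, and since $g$ was an arbitrary nonzero element of the span, $\Span\{f_n : n\in\N\} \subseteq X(T,E) \cup \{0\}$. I expect the only genuinely delicate point to be establishing continuity of $\Phi$ cleanly from \cref{prop:product} and \cref{prop:structure}; the surjectivity, though it is the conceptual key that makes the whole reduction work, is immediate from the explicit preimage once a single coefficient is known to be nonzero.
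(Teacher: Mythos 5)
Your proof is correct, but it takes a genuinely different route from the paper. The paper proceeds by hand: writing the element as $a_1f_1+\dots+a_sf_s$ with $a_s\neq 0$, it builds an explicit basic open box $\widehat{V} = b_1^{-1}B(0,\delta)\times\cdots\times b_s^{-1}B(\psi,\delta)\times L^0(\partial T,E)\times\cdots$ with $\delta=\e/s$, and then uses translation invariance of the metric $P$ together with the triangle inequality to show that $\omega_n(f)\in\widehat{V}$ forces $\omega_n(a_1f_1+\dots+a_sf_s)\in B(\psi,\e)$. You instead package the whole reduction into the single continuous linear map $\Phi(\psi)=\sum_i a_i\psi_{n_i}$, observe $\Phi(\omega_n(f))=\omega_n(g)$, and pull $V$ back to the nonempty open set $\Phi^{-1}(V)$; surjectivity of $\Phi$ (via the one-nonzero-coordinate preimage) replaces the paper's nonemptiness check for $\widehat{V}$. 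Both arguments rest on the same two pillars, \cref{prop:structure} and \cref{prop:product}, and in effect the paper's $\widehat{V}$ is just an explicitly constructed open subset of your $\Phi^{-1}(V)$. Your version buys conceptual clarity, avoids all metric estimates, and gives the stronger exact identity $\{n:\omega_n(g)\in V\}=\{n:\omega_n(f)\in\Phi^{-1}(V)\}$ rather than a one-sided inclusion; the cost is that the continuity of $\Phi$ must be argued carefully (sequential continuity via coordinatewise convergence suffices here since $L^0(\partial T,E^\N)$ is metrizable), whereas the paper's computation with $P$ is entirely self-contained at the level of the metric. One small point worth making explicit in your write-up, which you do address in passing: you need $g\in H(T,E)$ before membership in $X(T,E)$ even makes sense, and this follows because $H(T,E)$ is a linear subspace of $E^T$.
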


\begin{proof}
Let $a_1f_1 + \dots + a_sf_s$ be an arbitrary non-zero element of the set $\Span\{f_n : n\in \N\}$, where $s\in \N$ and $a_1,\dots,a_s \in \F$ with $a_s \neq 0$.
Let $V$ be an arbitrary nonempty open subset of $L^0(\partial T, E)$.
There exist some $\psi \in V$ and $\e > 0$ such that $B(\psi,\e) \subseteq V$.
For $i \in \{1,\dots, s\}$ we define
$$
b_i =
\begin{cases}
a_i, &\text{if } a_i \neq 0 \\
1, &\text{if } a_i = 0
\end{cases}.
$$
This way $b_1,\dots,b_s$ are all invertible elements of the field $\F$.
Let $\delta = \dis{\frac{\e}{s}}$ and consider the set
$$
\widehat{V} = b_1^{-1}B(0,\delta) \times b_2^{-1}B(0,\delta) \times \cdots \times b_s^{-1}B(\psi,\delta) \times L^0(\partial T, E) \times L^0(\partial T, E) \times \cdots.
$$
We now prove the inclusion
$$\{n \in \N: \omega_n(f) \in \widehat{V} \} \subseteq \{n \in \N: \omega_n(a_1f_1 + \dots + a_sf_s) \in V\}.$$
Let $n \in \N$ be arbitrary such that $\omega_n(f) \in \widehat{V}$.
Then for every $i \in \{1,\dots,s-1\}$ we have
$$\omega_n(f_i) \in b_i^{-1}B(0,\delta) \text{~~~and~~~} \omega_n(f_s) \in b_s^{-1}B(\psi,\delta).$$
By definition of $b_1,\dots,b_n$ and linearity of $\omega_n$ we conclude that for all $i \in \{1,\dots,s-1\}$
$$P(\omega_n(a_if_i), 0) < \delta \text{~~~and~~~} P(\omega_n(a_sf_s), \psi) < \delta,$$
where $P$ is defined by \cref{eq:P}.
As the metric $d$ is translation invariant, the metric $P$ is translation invariant and thus using the triangle inequality and the linearity of $\omega_n$ we obtain
\begin{align*}
P(\omega_n(a_1f_1 + \dots + a_sf_s), \psi) &\leq P(\omega_n(a_1f_1),0) + P(\omega_n(a_2f_2 + \dots + a_sf_s), \psi) \\
&\leq P(\omega_n(a_1f_1),0) + P(\omega_n(a_2f_2),0) + P(\omega_n(a_3f_3 + \dots + a_sf_s), \psi) \\
&~~\vdots \\
&\leq P(\omega_n(a_1f_1), 0) + \dots + P(\omega_n(a_{s-1}f_{s-1}),0) + P(\omega_n(a_sf_s),\psi) \\
&< s\cdot \delta.
\end{align*}
Since $s\cdot \delta = \e$, it follows that $\omega_n(a_1f_1+\dots+a_sf_s) \in B(\psi,\e)$, which concludes the proof of the desired inclusion, as $B(\psi, \e) \subseteq V$.
The components of the cartesian product that is the set $\widehat{V}$ are all open sets of $L^0(\partial T, E)$, as the latter is a topological vector space over $\F$ by \cref{prop:structure}.
Thus $\widehat{V}$ is a basic open subset of $L^0(\partial T,E)^\N$.
By \cref{prop:product}, $\widehat{V}$ is an open subset of $L^0(\partial T,E^\N)$ and it is also nonempty.
Since $f \in X(T,E^\N)$, the set $\{n \in \N: \omega_n(f) \in \widehat{V}\}$ has upper density equal to one, which, given the inclusion we proved, implies that the set $\{n \in \N: \omega_n(a_1f_1 + \dots + a_sf_s) \in V\}$ has upper density equal to one.
Therefore $a_1f_1 + \dots + a_sf_s $ belongs to the class $ X(T,E)$.
\end{proof}

We now state and prove the second lemma.

\begin{lemma}
\label{lem:density}
There exists some sequence $(f_n)_{n=1}^{\infty}$ of harmonic functions dense in $H(T,E)$ such that the function $f = (f_1,f_2,\dots) $ belongs to the class $X(T,E^\N)$.
\end{lemma}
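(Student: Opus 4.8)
The plan is to build the sequence $(f_n)_{n=1}^{\infty}$ by an explicit level-by-level induction on the tree, carrying out two jobs at once: arranging that $f=(f_1,f_2,\dots)\in X(T,E^\N)$ and that $\{f_n:n\in\N\}$ is $\rho$-dense in $H(T,E)$. First I would set up the bookkeeping. By separability of $H(T,E)$ I fix a sequence $(h_j)_{j=1}^{\infty}$ dense in $H(T,E)$. For the universality side I fix a sequence $(\Xi_m)_{m=1}^{\infty}$ dense in $L^0(\partial T,E^\N)$; by \cref{prop:product} and the $2^{-i}$-weighting in $\widetilde{P}$, I may take each $\Xi_m$ of the form $(\xi_m^1,\dots,\xi_m^{s_m},0,0,\dots)$ with each $\xi_m^i$ a simple ($\mathcal{M}_{k(m)}$-measurable) function $\partial T\to E$, so that only finitely many coordinates are actually constrained. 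The definition of $X(T,E^\N)$ then reduces to the countable family of requirements that for every pair $(m,\ell)$ the set $\{n\in\N:\widetilde{P}(\omega_n(f),\Xi_m)<1/\ell\}$ has upper density one, since every nonempty open $W\subseteq L^0(\partial T,E^\N)$ contains a ball $B(\Xi_m,1/\ell)\subseteq W$ for suitable $(m,\ell)$.

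The engine of the construction is the approximation step from the single-space results, which I would import and adapt: given $f$ already defined and harmonic up to some level $N$ (with arbitrary prescribed values there), a simple target $\Xi$ in $L^0(\partial T,E^\N)$, and $\e>0$, one can extend $f$ harmonically to deeper levels so that $\widetilde{P}(\omega_n(f),\Xi)<\e$ for every $n$ in a long range $(N',N'']$. This uses that each $S(x)$ has at least two elements and that the weights $w(x,y)\in\F\setminus\{0\}$ sum to one, so that prescribed parent values are realizable as (affine, not merely convex) combinations of child values, with a small-$\P$-measure exceptional set absorbing the harmonicity constraints; the hypotheses on $E$ and $\F$ and the structure developed in \cite{BIEHLER2,KONIDAS} are exactly what make the exceptional measure controllable and uniform across the whole range. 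With this step in hand I would enumerate the tasks $(m,\ell)$ so that each appears infinitely often, and build consecutive blocks of levels $(L_{t-1},L_t]$ on which $\omega_n(f)$ stays within $1/\ell_t$ of $\Xi_{m_t}$, choosing $L_t$ so large that $L_{t-1}/L_t\to 0$. Then, for each fixed $(m,\ell)$, the blocks devoted to it dominate their own initial segments $[1,L_t]$, forcing the good set to have density at least $1-L_{t-1}/L_t\to 1$; hence its upper density is one, and $f\in X(T,E^\N)$.

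To secure density of the components I would use the shallow levels, which are essentially free. Concretely, I arrange that $f_k$ agrees with some $h_{j(k)}$ on $T_0\cup\dots\cup T_{N(k)}$, choosing the schedule $k\mapsto(j(k),N(k))$ so that every index $j$ recurs with $N(k)\to\infty$. Since $h_{j(k)}$ is harmonic, prescribing $f_k$ on an initial segment of levels is consistent with harmonicity, and because agreement on the first $N(k)$ levels makes the $\rho$-tail arbitrarily small, the set $\{f_k\}$ becomes dense in $H(T,E)$. The point is that this shallow prescription does not conflict with the deep universality requirements: in each block only finitely many coordinates are constrained, and any prescribed shallow values can be realized by a suitable harmonic choice of the deep values (again via the affine flexibility above), so the blocks live strictly below the finitely many levels already committed to density for the relevant coordinates.

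The main obstacle, and where I would spend the most care, is the simultaneous measure control in the block construction. I need the exceptional set on which $\omega_n(f)$ deviates from the current target to have small $\P$-measure uniformly for all $n$ in the block, not merely at its lower endpoint; this is delicate because the harmonic weights $w(x,y)$ governing the admissible values and the transition probabilities $q(x,y)$ defining $\P$ are unrelated, so controlling the affine "defect" of a correction does not automatically control the measure of the sectors carrying it. Distributing the corrections over enough descendants to keep the $\P$-measure small at every intermediate level, while respecting the previously fixed shallow values and the transition between one block's target and the next, is the technical heart of the argument; once it is handled as in the single-space case, harmonicity of $f$ is immediate from the construction, membership in $X(T,E^\N)$ follows from the block-domination computation, and density of $(f_n)$ follows from the shallow matching, completing the proof.
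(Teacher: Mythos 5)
Your overall strategy is viable, but as written it has a genuine gap at exactly the place you identify as ``the technical heart'': the block-extension step. You assert that, given a harmonic function prescribed up to some level and a simple target $\Xi$, one can extend harmonically so that $\omega_n(f)$ stays within $\e$ of $\Xi$ for \emph{every} $n$ in a long range $(N',N'']$, with the exceptional $\P$-measure controlled uniformly across the range. This is not a statement you can quote: the citable result from \cite{BIEHLER2} is only that $X(T,E^{\N})$ is dense in $H(T,E^{\N})$ (a black box that does not expose any block structure), and reproving the uniform-in-$n$ measure control for general $E$ and $\F$ --- where the weights $w(x,y)$ are arbitrary nonzero field elements unrelated to the probabilities $q(x,y)$ --- is precisely the hard part of that paper. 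Since your construction stands or falls on this lemma and you neither prove it nor reduce it to a stated result, the proof is incomplete. (Your bookkeeping around it --- finitely supported simple targets, blocks with $L_{t-1}/L_t\to 0$ giving upper density one, shallow matching for $\rho$-density, and the scheduling that keeps the two jobs from colliding --- is all sound.)

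It is worth contrasting this with the paper's route, which avoids the block construction entirely. The paper takes a single $h=(h_1,h_2,\dots)\in X(T,E^{\N})$, whose existence \emph{is} directly quotable from the density of $X(T,E^{\N})$ in $H(T,E^{\N})$, and adds to each coordinate an ``eventually constant'' harmonic correction $g_n$: define $g_n=p_n-h_n$ on levels $0,\dots,N(n)$ and propagate each value unchanged to all descendants. Such a $g_n$ is automatically harmonic because the weights sum to one, it forces $\rho(p_n,h_n+g_n)<1/n$ (giving density of the components), and it satisfies $\omega_k(g_n)=\omega_{N(n)}(g_n)$ for all $k>N(n)$. Hence for $n$ beyond a fixed level $L$ the sequence $\omega_n(h+g)$ is just $\omega_n(h)$ translated by the fixed vector $\omega_L(g)$, and since $L^0(\partial T,E^{\N})$ is a topological vector space, $V-\omega_L(g)$ is again open, so the upper-density-one property transfers from $h$ to $h+g$. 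All the delicate measure control you would have to redo is thereby absorbed into the single cited existence statement. If you want to salvage your approach, you must either prove your extension lemma in full or restructure the argument so that, like the paper, it consumes only the existence of one element of $X(T,E^{\N})$.
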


\begin{proof}
The metric space $H(T,E)$ is separable.
Let $\{p_n: n \in \N\}$ be a dense sequence in $H(T,E)$.
The space $E^\N$ when considered with the product metric, which is translation invariant, is a separable topological vector space over $\F$.
Thus, as proven in \cite{BIEHLER2}, the class $X(T,E^\N)$ is dense in $H(T,E^\N)$ and in particular nonempty, so let $h = (h_1,h_2,\dots)$ be an element of $X(T,E^\N)$.
We now fix $n \in \N$.
There exists some $j_0(n)\in \N$ such that
$$\sum_{j= j_0(n) + 1}^{\infty}{\frac{1}{2^j}} < \frac{1}{n}.$$
There also exists some $N(n) \in \N$ such that
$$\{z_1,\dots,z_{j_0(n)} \} \subseteq \bigcup_{k=0}^{N(n)}{T_k},$$
where $\{z_j:j\in \N\}$ is the enumeration of $T$ through which the metric $\rho$ was defined.
We set $g_n(x) = p_n(x)-h_n(x)$ for all $x \in \bigcup_{k=0}^{N(n)}{T_k}$ and  $g_n(x) = g_n(x^-)$ for all $x \not\in \bigcup_{k=N(n)+1}^{\infty}{T_k}$, where $x^-$ is the father of $x$.
Notice that the function $g_n: T \to E$ defined this way is a generalized harmonic function and
$$\rho(p_n - h_n,g_n) = \sum_{j = 1}^{\infty}{\frac{1}{2^j}\frac{d((p_n - h_n)(z_j), g_n(z_j))}{1 + d((p_n - h_n)(z_j), g_n(z_j))}} \leq \sum_{j= j_0(n) + 1}^{\infty}{\frac{1}{2^j}} < \frac{1}{n}.$$
Moreover, if $k \in \N$ is such that $k > N(n)$ and $e \in \partial T$, then letting $x$ and $y$ be the unique elements of $e \cap T_{N(n)}$ and $e \cap T_k$ respectively $y$ is a descendant of $x$ and so $g_n(x) = g_n(y)$ by definition of $g_n$ for vertices at distance greater than $N(n)$ from the root of the tree.
That is, $\omega_k(g_n) = \omega_{N(n)}(g_n)$ for all $k > N(n)$.
We show that the sequence $(h_n + g_n)_{n=1}^{\infty}$ is as required.
Notice that the metric $\rho$ is translation invariant as the metric $d$ is translation invariant.
Thus $\rho(p_n, h_n + g_n) < 1/n$ for all $n \in \N$.
Since $H(T,E)$ has no isolated points and the sequence $(p_{n})_{n=1}^{\infty}$ is dense in $H(T,E)$, the sequence $(h_n + g_n)_{n=1}^{\infty}$ is dense in $H(T,E)$.
To complete the proof we show that the function $h+g = (h_1+g_1,h_2+g_2,\dots)\in H(T,E^\N)$ belongs to the class $X(T,E^\N)$.
It suffices to demonstrate that for every nonempty open basic set $V \subseteq L^0(\partial T,E)$ the set $\{n \in \N: \omega_n(h+g) \in V\}$ has upper density equal to one.
By \cref{prop:product}, a nonempty basic open set of $L^0(\partial T,E^\N)$ can be written as
$$V = V_1 \times \cdots \times V_s \times L^0(\partial T, E) \times L^0(\partial T, E) \times \cdots,$$
where $V_1,\dots,V_s$ are nonempty open subsets of $L^0(\partial T, E)$.
Let $L = \max\{N(1),\dots,N(s)\}$.
Then, for all $n \geq L$ and all $j \in \{1,\dots,s\}$ we have $\omega_n(g_j) = \omega_L(g_j)$.
We now prove the inclusion
$$\{n \geq L: \omega_n(h) \in V - \omega_L(g)\} \subseteq \{n\in\N: \omega_n(h+g) \in V\}.$$
Let $n \in\N$ with $n \geq L$ be such that $\omega_n(h) \in V - \omega_L(g)$.
Then $\omega_n(h_j) \in V_j - \omega_L(g_j)$ for all $j \in \{1,\dots,s\}$, which implies that $\omega_n(h_j)+ \omega_n(g_j) \in V_j$ for every $j \in \{1,\dots,s\}$.
But by linearity of $\omega_n$ it is $\omega_n(h_j + g_j) = \omega_n(h_j) + \omega_n(g_j)$ for all $j \in \{1,\dots,s\}$, hence $\omega_n(h_j+g_j) \in V_j$ for every $j \in \{1,\dots,s\}$, which in turn implies that $\omega_n(h+g) \in V$.
This concludes the proof of the desired inclusion.
Notice however that $V - \omega_L(g)$ is a nonempty open subset of $L^0(\partial T, E^\N)$, as $L^0(\partial T, E^\N)$ is a topological vector space over $\F$ by \cref{prop:structure}, and $h \in X(T,E^\N)$
Therefore the set $\{n \in \N: \omega_n(h) \in V - \omega_L(g)\}$ has upper density equal to one.
This implies that the set $\{n \geq L: \omega_n(h) \in V - \omega_L(g)\}$ has upper density equal to one and thus, given the inclusion we proved, the set $\{n\in\N: \omega_n(h+g) \in V\}$ has upper density equal to one.
\end{proof}

The next theorem follows by combining \cref{lem:density,,lem:linearity}.

\begin{theorem}
\label{thm:algX}
The set $X(T,E) \cup \{0\}$ contains a vector space which is dense in $H(T,E)$, that is, we have algebraic genericity for the set $X(T,E)$.
\end{theorem}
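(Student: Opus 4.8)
The plan is to combine the two lemmata essentially verbatim, with no substantive additional work required. First I would apply \cref{lem:density} to obtain a sequence $(f_n)_{n=1}^{\infty}$ of harmonic functions that is dense in $H(T,E)$ and for which the assembled function $f = (f_1, f_2, \dots)$ belongs to the class $X(T, E^\N)$. This single object is precisely what the two lemmata are engineered to produce and consume.

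Next I would define $F = \Span\{f_n : n \in \N\}$ and invoke \cref{lem:linearity} with exactly this $f$. Since $f \in X(T, E^\N)$, the lemma yields at once that $F \subseteq X(T,E) \cup \{0\}$. Thus $F$ is a vector subspace of $H(T,E)$ lying entirely inside $X(T,E) \cup \{0\}$, which is the first of the two properties demanded by the statement.

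It then remains only to check that $F$ is dense in $H(T,E)$, and this is immediate. Each $f_n$ is an element of $F$, so $\{f_n : n \in \N\} \subseteq F$; since $\{f_n : n \in \N\}$ is already dense in $H(T,E)$ by \cref{lem:density}, any superset of it, in particular $F$, is dense as well. Hence $F$ is a dense vector subspace of $H(T,E)$ contained in $X(T,E) \cup \{0\}$, which is exactly algebraic genericity for the set $X(T,E)$.

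Because all the genuine content is already sealed inside the two lemmata, I do not expect any real obstacle at this stage: the only facts being used here are the trivial observations that $\{f_n : n \in \N\} \subseteq \Span\{f_n : n \in \N\}$ and that a superset of a dense set is dense. The actual difficulty was discharged earlier, in \cref{lem:density}, where one must exhibit a \emph{single} element of $X(T, E^\N)$ whose coordinate functions are simultaneously dense in $H(T,E)$, and in \cref{lem:linearity}, where membership in $X(T, E^\N)$ is transferred to arbitrary finite linear combinations via the upper-density inclusion argument.
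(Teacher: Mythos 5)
Your proposal is correct and is exactly the combination the paper intends: \cref{lem:density} supplies the sequence $(f_n)_{n=1}^{\infty}$ dense in $H(T,E)$ with $f=(f_1,f_2,\dots)\in X(T,E^\N)$, and \cref{lem:linearity} places $\Span\{f_n : n\in\N\}$ inside $X(T,E)\cup\{0\}$, with density of the span being immediate. The paper gives no further detail, stating only that the theorem follows by combining the two lemmata, so your write-up matches its proof.
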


Finally, we state and prove the main theorem of this paper.

\begin{theorem}
The set $U(T,E) \cup \{0\}$ contains two vector spaces $F_1, F_2$ dense in $H(T,E)$ with $F_1 \cap F_2 = \{0\}$, that is, we have double algebraic genericity for the set $U(T,E)$.
\end{theorem}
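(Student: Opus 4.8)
The plan is to construct $F_1$ and $F_2$ directly from the two algebraic genericity results now available and to obtain the condition $F_1 \cap F_2 = \{0\}$ for free from \cref{prop:disj}. Concretely, I would take $F_1$ to be a vector space with $F_1 \subseteq U_{FM}(T,E) \cup \{0\}$ that is dense in $H(T,E)$; such a space exists by the algebraic genericity of $U_{FM}(T,E)$, proven in \cite{BIEHLER2} under exactly the present assumptions on $E$. I would take $F_2$ to be a vector space with $F_2 \subseteq X(T,E) \cup \{0\}$ that is dense in $H(T,E)$, which exists by \cref{thm:algX}.

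With these choices, two of the three required properties are immediate. Since $U_{FM}(T,E) \subseteq U(T,E)$ and $X(T,E) \subseteq U(T,E)$, we get $F_1 \subseteq U(T,E) \cup \{0\}$ and $F_2 \subseteq U(T,E) \cup \{0\}$, and both spaces are dense in $H(T,E)$ by construction.

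It remains to verify $F_1 \cap F_2 = \{0\}$, which is the only step that calls for an argument, albeit a short one. The inclusion $\{0\} \subseteq F_1 \cap F_2$ holds because $F_1$ and $F_2$ are vector spaces. For the reverse inclusion, suppose towards a contradiction that there is some $f \in F_1 \cap F_2$ with $f \neq 0$. Then $f \in F_1 \setminus \{0\} \subseteq U_{FM}(T,E)$ and simultaneously $f \in F_2 \setminus \{0\} \subseteq X(T,E)$, so $f \in U_{FM}(T,E) \cap X(T,E)$. This contradicts \cref{prop:disj}, which asserts that $U_{FM}(T,E) \cap X(T,E) = \varnothing$. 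Hence no nonzero element lies in the intersection, and $F_1 \cap F_2 = \{0\}$.

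I do not anticipate any genuine obstacle: all the difficulty has been absorbed into the two separate genericity statements (the $U_{FM}(T,E)$ result of \cite{BIEHLER2} and \cref{thm:algX}) and into the disjointness in \cref{prop:disj}. The single point to keep in mind is that each $F_i$ minus its zero vector lands in the corresponding special subclass, so that a hypothetical nonzero element of $F_1 \cap F_2$ is forced into $U_{FM}(T,E) \cap X(T,E)$; it is precisely this feature that lets the disjointness of the two classes upgrade to $F_1 \cap F_2 = \{0\}$.
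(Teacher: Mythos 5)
Your proposal is correct and follows essentially the same route as the paper: take $F_1$ from the algebraic genericity of $U_{FM}(T,E)$ in \cite{BIEHLER2}, take $F_2$ from \cref{thm:algX}, and use \cref{prop:disj} to force $F_1 \cap F_2 = \{0\}$. Your spelled-out verification that a nonzero element of the intersection would land in $U_{FM}(T,E) \cap X(T,E)$ is just a slightly more explicit version of the paper's one-line conclusion.
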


\begin{proof}
By algebraic genericity of $U_{FM}(T,E)$, proven in \cite{BIEHLER2}, there exists a vector space $F_1$ dense in $H(T,E)$ such that $F_1 \subseteq U_{FM}(T,E) \cup \{0\}$.
By \cref{thm:algX} there exists a vector space $F_2$ dense in $H(T,E)$ such that $F_2 \subseteq X(T,E) \cup \{0\}$.
Since $U_{FM}(T,E)$ and $X(T,E)$ are contained in $U(T,E)$, the vector spaces $F_1,F_2$ are contained in $U(T,E)$.
Finally, by \cref{prop:disj}, it follows that $F_1 \cap F_2 \subseteq \{0\}$ and thus $F_1 \cap F_2 = \{0\}$.
\end{proof}

\printbibliography
\bigskip
\noindent
C. A. Konidas, V. Nestoridis\\
National and Kapodistrian University of Athens\\
Department of Mathematics\\
e-mail addresses: \href{mailto:xkonidas@gmail.com}{\tt xkonidas@gmail.com},
\href{mailto:vnestor@math.uoa.gr}{\tt vnestor@math.uoa.gr}

\end{document}